\documentclass[11pt,a4paper]{article}
\usepackage[margin=25mm]{geometry}
\usepackage{amsmath,amssymb,amsthm}
\usepackage{microtype}
\usepackage[colorlinks=true,linkcolor=blue,citecolor=blue,urlcolor=blue]{hyperref}
\hypersetup{
 pdftitle={Failure of Rockafellar's Sum Conjecture via an Explicit Non-(FPV) Operator},
 pdfsubject={Technical note inspired by Weifeng Yang; explicit formula and failure of FPV via domain closure}
}
\newcommand{\R}{\mathbb R}
\newcommand{\N}{\mathbb N}
\newcommand{\pair}[2]{\langle #1,#2\rangle}
\newcommand{\norm}[1]{\lVert #1\rVert}
\DeclareMathOperator{\dom}{dom}
\DeclareMathOperator{\gra}{gra}
\DeclareMathOperator{\sgn}{sgn}
\DeclareMathOperator{\inte}{int}
\newtheorem{theorem}{Theorem}
\newtheorem{lemma}[theorem]{Lemma}
\newtheorem{proposition}[theorem]{Proposition}
\newtheorem{corollary}[theorem]{Corollary}
\theoremstyle{remark}

\allowdisplaybreaks[1]
\title{Failure of Rockafellar's Sum Conjecture via an Explicit Non-(FPV) Operator}
\author{
Radu Ioan Bo\c{t}\thanks{Faculty of Mathematics, University of Vienna,
Oskar-Morgenstern-Platz 1, 1090 Vienna, Austria.
\texttt{email:} radu.bot@univie.ac.at}
}

\begin{document}
\maketitle
\vspace{-1.5em}
\begin{abstract}
Building on a construction of Weifeng Yang, we present a simplified
counterexample to Rockafellar's sum conjecture on $c_0(\N_0\times\N)$. The central
observation is geometric: the domain of a maximally monotone operator
can have a nonconvex norm closure.  In our example, this gives a short
proof that the operator is not of type (FPV), which in turn implies
the failure of the sum conjecture.  We define an explicit operator using geometric series and provide a detailed proof of its maximal monotonicity.  The example was developed with assistance from GPT-6 Astra. The purpose is to simplify and explain the construction,  not to claim an independent counterexample mechanism.
\end{abstract}

\section{Motivation and the FPV route}

The source of inspiration is Weifeng Yang's preprint \cite{Yang}.  Its construction on $c_0$ couples triangular blocks to a profile with a nonrealizable limiting sequence of block sums. The present note keeps this underlying obstruction but changes the scalar coupling and replaces the earlier path formulation by an explicit coordinate formula.  Our aim is expository: to simplify the realization, use the convexity of domain closures required by type (FPV), and apply the normal cone criterion to obtain the failure of the sum assertion.

For a real Banach space $X$, an operator $A:X\rightrightarrows X^*$ is monotone if
\[
 \pair{x-y}{a-c}\ge0
 \qquad \forall (x,a),(y,c)\in\gra A.
\]
A pair $(z,p)\in X\times X^*$ is monotonically related to $\gra A$ if
\[
 \pair{z-x}{p-a}\ge0\qquad \forall (x,a)\in\gra A.
\]
A monotone operator is maximally monotone precisely when every such pair already belongs to its graph.

A maximally monotone operator is of type (FPV) if every open convex set $U\subseteq X$ with $U\cap\dom A\ne\varnothing$ has the following property (see \cite[Theorem~44.1]{Simons})
\begin{equation}\label{eq:fpv}
 \left.
 \begin{aligned}
 z&\in U,\qquad p\in X^*,\\
 \pair{z-x}{p-a}&\ge0
 &&\text{for every }(x,a)\in\gra A\text{ with }x\in U
 \end{aligned}
 \right\}
 \;\Longrightarrow\; (z,p)\in\gra A.
\end{equation}

Rockafellar's sum theorem \cite{Rockafellar} proves maximality of $A+B$ on a reflexive Banach space under the qualification
\begin{equation}\label{eq:cq}
 \dom A\cap\inte(\dom B)\ne\varnothing.
\end{equation}
The unrestricted Banach-space sum conjecture asserts the same conclusion without reflexivity. The connection with (FPV) follows from the sufficient condition of Simons and Verona--Verona recorded in \cite{Simons}: for a maximally monotone $A$,
\begin{equation}\label{eq:criterion}
 \left.
 \begin{gathered}
 A+N_K\text{ is maximally monotone for every closed convex set }K\subseteq X\\
 \text{such that }\dom A\cap\inte K\ne\varnothing
 \end{gathered}
 \right\}
 \;\Longrightarrow\; A\text{ is of type (FPV)}.
\end{equation}
Here $N_K$ denotes the normal cone operator of the set $K$.  Consequently,  a maximally monotone operator failing (FPV) yields a nonmaximal normal cone sum satisfying \eqref{eq:cq}. 

\section{The explicit operator and the geometric idea}

Set
\[
 I=\N_0\times\N,\qquad X=c_0(I),\qquad X^*=\ell^1(I).
\]
The notation $c_0(I)$ means: for every $\varepsilon>0$, only finitely many coordinates satisfy $|x_{b,j}|\ge\varepsilon$. The norm is $\norm{x}_\infty=\sup_{b\geq 0,j \geq 1}|x_{b,j}|$, and
\[
 \pair{x}{p}=\sum_{b\ge0}\sum_{j\ge1}x_{b,j}p_{b,j},
 \qquad |\pair{x}{p}|\le\norm{x}_\infty\norm{p}_1.
\]
The coordinate vectors are denoted by $(e_{b,j})_{b \geq 0, j \geq 1}$. 

Define
\begin{equation}\label{eq:domain}
\begin{aligned}
D=\Biggl\{x\in X:\;&|x_{0,1}|>1,\\[-1mm]
&\sum_{b=1}^{\infty}\sum_{j=1}^{\infty}
\left|
4^b\left(x_{b,j}
+2\sum_{k=1}^{j-1}(-1)^{j-k}x_{b,k}\right)
+\frac{2(-1)^{j-1}}{|x_{0,1}|^b}
\right|<+\infty
\Biggr\}.
\end{aligned}
\end{equation}
For $x\in D$, let $Ax$ be the singleton whose element has coordinates
\begin{equation}\label{eq:operator}
(Ax)_{b,j}=
\begin{cases}
x_{0,1}-\sgn(x_{0,1}),&b=0,\ j=1,\\
0,&b=0,\ j\ge2,\\[1mm]
\displaystyle
4^b\left(x_{b,j}
+2\sum_{k=1}^{j-1}(-1)^{j-k}x_{b,k}\right)
+\frac{2(-1)^{j-1}}{|x_{0,1}|^b},
&b\ge1,\ j\ge1.
\end{cases}
\end{equation}
Set $Ax=\varnothing$ for $x\notin D$, thus $\dom A =D$.  Condition \eqref{eq:domain} ensures that $Ax$ lies  in $\ell^1(I)$ for every $x \in D$.  

\begin{theorem}\label{thm:main}
The operator defined by \eqref{eq:domain}--\eqref{eq:operator} has nonempty domain, is maximally monotone in $c_0(I)\times\ell^1(I)$, and is not of type (FPV). More precisely,
\[
 \{x_{0,1}:x\in D\}=(-\infty,-1)\cup(1,+\infty),
\]
and $\overline{\dom A}$ is not convex.
\end{theorem}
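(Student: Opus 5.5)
The plan is to prove the theorem in four steps: nonemptiness of $D$, the range of $x_{0,1}$, nonconvexity of the closure (which gives the failure of (FPV)), and maximal monotonicity. The last is the main work; the (FPV) part is short once the range statement is known.

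\emph{Nonemptiness and the range of $x_{0,1}$.} Write $T$ for the lower-triangular Toeplitz map $(Ty)_j=y_j+2\sum_{k<j}(-1)^{j-k}y_k$ acting on each block, and $u_j=(-1)^{j-1}$. Fix $a$ with $|a|>1$ and put $r=1/|a|<1$. A direct computation gives $Te_1=2u-e_1$. So I take
\[
x=a\,e_{0,1}-\sum_{b\ge1}(r/4)^b e_{b,1}.
\]
Then $4^b(Tx_b)+2r^bu=r^b e_1$ for every $b\ge1$. These norms are summable in $b$, and $x\in c_0(I)$. Hence $x\in D$, so $D\neq\varnothing$ and every $a$ with $|a|>1$ occurs as $x_{0,1}$. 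The reverse inclusion is built into \eqref{eq:domain}, which proves the displayed identity.

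\emph{Nonconvexity and failure of (FPV).} Since $D\subseteq\{|x_{0,1}|>1\}$ and this condition is closed, we get $\overline{D}\subseteq\{|x_{0,1}|\ge1\}$. The points above for $a=2$ and $a=-2$ have identical blocks $b\ge1$, because these depend only on $|a|$. Both lie in $D$, but their midpoint has $x_{0,1}=0$, so it is not in $\overline{D}$. Thus $\overline{\dom A}$ is not convex. Operators of type (FPV) have convex domain closure (Simons \cite{Simons}), so $A$ is not of type (FPV). This step is immediate once maximal monotonicity is established.

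\emph{Monotonicity.} Parametrize the graph by $p=Ax$. For a block $b\ge1$, write $p_b=(Ax)_{b,\cdot}\in\ell^1$. Inverting $T$ uses the symbol identity $(1+z)/(1-z)=1+2\sum_{m\ge1}z^m$, so $(T^{-1}y)_j=y_j+2\sum_{k<j}y_k$, and also $T^{-1}u=(1,1,\dots)$. These give
\[
x_{b,j}=4^{-b}\Bigl(p_{b,j}+2\sum_{k<j}p_{b,k}\Bigr)-2(r/4)^b .
\]
The requirement $x_b\in c_0$ then forces $\sum_j p_{b,j}=r^b$, and equivalently
\[
x_{b,j}=-4^{-b}\Bigl(p_{b,j}+2\sum_{k>j}p_{b,k}\Bigr).
\]
Conversely, any $p_b\in\ell^1$ with $\sum_j p_{b,j}=r^b$ and summable norms yields a point of $D$. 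Hence
\[
\gra A=\{(x(a,p),p)\}, \qquad |a|>1,\quad \sum_j p_{b,j}=|a|^{-b},
\]
together with the summability condition. Using $\sum_j p_jq_j+\sum_{j}\sum_{k>j}(p_jq_k+q_jp_k)=(\sum p)(\sum q)$, the block pairings collapse, so that with $t=|x_{0,1}|$ and $\tau=|y_{0,1}|$
\[
\pair{x-y}{Ax-Ay}=(x_{0,1}-y_{0,1})\bigl(x_{0,1}-y_{0,1}-(\sgn x_{0,1}-\sgn y_{0,1})\bigr)-\sum_{b\ge1}4^{-b}(t^{-b}-\tau^{-b})^2.
\]
The sum is at most $|t-\tau|^2\sum_b b^2 4^{-b}=\tfrac{20}{27}|t-\tau|^2$.
\begin{itemize}
\item In the same-sign case the first term is $(t-\tau)^2$, so the difference is nonnegative.
\item In the opposite-sign case the first term is $(t+\tau)(t+\tau-2)\ge(t+\tau-2)^2\ge|t-\tau|^2$, so again the difference is nonnegative.
\end{itemize}
This proves monotonicity.

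\emph{Maximality (main obstacle).} Let $(z,q)\in c_0\times\ell^1$ be monotonically related to $\gra A$. The graph is an affine family over $p$ in each block, with the single constraint $\sum_j p_{b,j}=r^b$ fixed by $a$. So I would first test against graph points that differ from a base point only by finitely supported, zero-sum perturbations in one block $b$. The pairing is affine-plus-quadratic in the perturbation, with the quadratic part $4^{-b}(\sum\delta)^2=0$. Its linearity then forces
\[
z_{b,j}=-4^{-b}\Bigl(q_{b,j}+2\sum_{k>j}q_{b,k}\Bigr)+c_b
\]
for some constants $c_b$. Because $z_b\in c_0$ and $q_b\in\ell^1$, this gives $c_b=0$. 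So $z$ is determined by $q$ exactly as in the parametrization.

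Next, testing against perturbations $\lambda\cdot(\text{direction in } a)$ along the one-parameter curves $a\mapsto x(a,p)$ should pin down $\sum_j q_{b,j}=|z_{0,1}|^{-b}$, the condition $|z_{0,1}|>1$, and $q_{0,1}=z_{0,1}-\sgn z_{0,1}$. This would be a one-dimensional maximality argument: the scalar map $a\mapsto a-\sgn a$ coupled with $\sum_b 4^{-b}|a|^{-2b}$-type terms is the derivative of a convex-like potential on each half-line. The delicate point is ruling out $|z_{0,1}|\le1$. For that I would let $a\to\pm1$ from both sides with suitable $p$ and exploit the jump of $\sgn$, which is exactly where the nonrealizable limit of the block sums $\sum_j q_{b,j}\to 1$ enters. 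I expect this to be the hardest step.
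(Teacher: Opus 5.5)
Your treatment of the domain, the first-coordinate range, the nonconvexity of $\overline{D}$ and monotonicity is correct and essentially the paper's. Inverting the lower-triangular block via the symbol $(1+z)/(1-z)$ replaces the paper's telescoping of \eqref{eq:recurrence}. Your bound $(t+\tau)(t+\tau-2)\ge(t+\tau-2)^2\ge(t-\tau)^2$ is a neat shortcut in the opposite-sign case. Your first maximality step, using zero-sum finitely supported perturbations in one block, also correctly yields $z_{b,j}=-4^{-b}\bigl(q_{b,j}+2\sum_{k>j}q_{b,k}\bigr)$, the paper's \eqref{eq:candidatetail}.

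The gap is the rest of the maximality proof. This is where the theorem is actually decided, and you leave it at ``should pin down''. Two ingredients are missing.
\begin{enumerate}
\item You never show $q_{0,j}=0$ for $j\ge2$. Test with $x+\lambda e_{0,j}$, which has the same image; your graph parametrization forgets these free coordinates.
\item You never derive the reduced inequality from which all remaining conclusions come. Insert your block formula for $z_b$, and the same formula for $x_b$, into the pairing with an \emph{arbitrary} graph point $x$ with $x_{0,1}=a$. Applying your energy identity to $q_b-(Ax)_b$, everything except $a$ drops out and
\[
\sum_{b\ge1}4^{-b}\Bigl(\sum_{j\ge1}q_{b,j}-|a|^{-b}\Bigr)^2\le(z_{0,1}-a)\bigl(q_{0,1}-a+\sgn a\bigr)\qquad\text{for all }|a|>1 .
\]
\end{enumerate}
Describing this step as perturbation along the curves $a\mapsto x(a,p)$, or via a ``convex-like potential'', misses the mechanism. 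The pairing is not affine in $a$, so unlike your block step no first-order equality comes out. What is needed is this global inequality at every $|a|>1$, in particular for $a$ approaching both $1$ and $-1$.

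The step you flag as hardest then has a short proof that your proposal does not contain. Suppose $|z_{0,1}|\le1$. Truncate the left side at $b\le N$ and let $a\downarrow1$ and $a\uparrow-1$. The same quantity $\sum_{b\le N}4^{-b}\bigl(\sum_jq_{b,j}-1\bigr)^2$ is then bounded by $(z_{0,1}-1)q_{0,1}$ and by $(z_{0,1}+1)q_{0,1}$. Multiply these by the weights $\frac{1+z_{0,1}}2$ and $\frac{1-z_{0,1}}2$, which are nonnegative exactly because $|z_{0,1}|\le1$ and sum to $1$, and add. The right side becomes $0$, hence $\sum_jq_{b,j}=1$ for every $b$, contradicting $q\in\ell^1(I)$.

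Only after this do the remaining conclusions follow:
\begin{itemize}
\item testing at $a=z_{0,1}$ gives $\sum_jq_{b,j}=|z_{0,1}|^{-b}$;
\item testing at $a=z_{0,1}\pm\delta$ gives $q_{0,1}=z_{0,1}-\sgn z_{0,1}$;
\item your converse parametrization then gives $z\in D$ with $Az=q$.
\end{itemize}
As written, maximal monotonicity is not established. Therefore the failure of (FPV) is not established either, since Simons' convex-closure theorem applies only to maximally monotone operators.
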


\section{Coordinate identities, matrices, and domain points}

\begin{lemma}\label{lem:coordinates}
Let $x\in D$. For every $b\ge1$ and $j\ge1$,  it holds
\begin{align}
 (Ax)_{b,1}&=4^b x_{b,1}+\frac{2}{|x_{0,1}|^{b}},\label{eq:start}\\
 (Ax)_{b,j}+(Ax)_{b,j+1}&=4^b(x_{b,j+1}-x_{b,j}),\label{eq:recurrence}\\
 x_{b,j}&=-4^{-b}\left((Ax)_{b,j}+2\sum_{k>j}(Ax)_{b,k}\right),\label{eq:tail}\\
 \sum_{j\ge1}(Ax)_{b,j}&=\frac{1}{|x_{0,1}|^{b}}.\label{eq:blocksum}
\end{align}
Consequently,
\begin{equation}\label{eq:normbound}
 \norm{Ax}_1\ge |x_{0,1}|-1+\frac1{|x_{0,1}|-1}\ge2.
\end{equation}
\end{lemma}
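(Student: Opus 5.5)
Fix $x\in D$ and $b\ge1$, and abbreviate $a_j=(Ax)_{b,j}$, $y_j=x_{b,j}$, $r=|x_{0,1}|>1$. The plan is to read off \eqref{eq:start} and \eqref{eq:recurrence} directly from \eqref{eq:operator}, then invert the recurrence to get \eqref{eq:tail}, then specialize to $j=1$ to get \eqref{eq:blocksum}, and finally estimate $\norm{Ax}_1$.

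First, \eqref{eq:start} is the case $j=1$ of \eqref{eq:operator}, since the inner sum is empty. For \eqref{eq:recurrence}, write $S_j=\sum_{k=1}^{j-1}(-1)^{j-k}y_k$. Then $S_{j+1}=-S_j-y_j$, so
\[
y_j+2S_j+y_{j+1}+2S_{j+1}=y_{j+1}-y_j .
\]
The constant terms $2(-1)^{j-1}r^{-b}$ and $2(-1)^{j}r^{-b}$ cancel. Multiplying by $4^b$ gives $a_j+a_{j+1}=4^b(y_{j+1}-y_j)$.

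For \eqref{eq:tail}, the key inputs are $y_j\to0$ (because $x\in c_0$) and $\sum_j|a_j|<\infty$ (because $x\in D$). Summing \eqref{eq:recurrence} from $j$ to $N-1$ telescopes on the right-hand side to $4^b(y_N-y_j)$. The left-hand side equals $a_j+2\sum_{k=j+1}^{N-1}a_k+a_N$. Letting $N\to\infty$ and using $a_N\to0$ and $y_N\to0$ gives
\[
a_j+2\sum_{k>j}a_k=-4^b y_j .
\]
This is exactly \eqref{eq:tail}.

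For \eqref{eq:blocksum}, combine \eqref{eq:tail} at $j=1$ with \eqref{eq:start}:
\[
-4^b y_1=a_1+2\sum_{k>1}a_k=2\sum_{k\ge1}a_k-a_1=2\sum_k a_k-4^b y_1-2r^{-b}.
\]
Hence $\sum_k a_k=r^{-b}$.

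For \eqref{eq:normbound}, bound each block by $\sum_j|a_j|\ge\bigl|\sum_j a_j\bigr|=r^{-b}$. The $(0,1)$ coordinate contributes $|x_{0,1}-\sgn(x_{0,1})|=r-1$. Summing over $b\ge1$,
\[
\norm{Ax}_1\ge r-1+\sum_{b\ge1}r^{-b}=r-1+\frac{1}{r-1}.
\]
This is at least $2$ by AM--GM with $t=r-1>0$, since $t+1/t\ge2$.

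The only delicate step is the limit passage in \eqref{eq:tail}. It needs both the $c_0$ decay of $x$ and the summability of the block coming from the definition of $D$. Everything else is finite algebra.
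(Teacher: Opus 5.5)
Your proposal is correct and follows the paper's proof essentially step for step. You get the recurrence by cancellation of the alternating sums, the tail formula by telescoping and passing to the limit using $x\in c_0(I)$ and block summability of $Ax$, the block sum by specializing at $j=1$, and the norm bound via $\bigl|\sum_j (Ax)_{b,j}\bigr|\le\sum_j|(Ax)_{b,j}|$ and a geometric series; your explicit AM--GM step only makes the final $\ge 2$ visible, which the paper leaves implicit.
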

\begin{proof}
Let $b \geq 1$. Add the defining formulas for $(Ax)_{b,j}$ and $(Ax)_{b,j+1}$.  The coefficients of $x_{b,k}$ for $k<j$ cancel, since
\[
 (-1)^{j-k}+(-1)^{j+1-k}=0,
\]
proving \eqref{eq:recurrence}. Summing that identity from $j$ to $N > j+1$ gives
\[
 (Ax)_{b,j}+2\sum_{k=j+1}^N(Ax)_{b,k}+(Ax)_{b,N+1}
 =4^b(x_{b,N+1}-x_{b,j}).
\]
Since $Ax\in\ell^1(I)$ and $x\in c_0(I)$,  $x_{b,N+1} \to 0$ and $(Ax)_{b,N+1} \to 0$ as $N \to +\infty$. This proves \eqref{eq:tail}. The identity \eqref{eq:tail} at $j=1$ and \eqref{eq:start} give
\[
\sum_{k\ge1}(Ax)_{b,k} = \frac12 (Ax)_{b,1} - \frac{1}{2} 4^b x_{b,1} = \frac{1}{|x_{0,1}|^{b}}.
\]
Finally,
\begin{align*}
 \norm{Ax}_1
 &=|x_{0,1}-\sgn(x_{0,1})|+\sum_{b\ge1}\sum_{j\ge1}|(Ax)_{b,j}|\\
 &\ge |x_{0,1}|-1+\sum_{b\ge1}\frac{1}{|x_{0,1}|^{b}}
 =|x_{0,1}|-1+\frac1{|x_{0,1}|-1}\ge2.
\end{align*}
\end{proof}
For each block, \eqref{eq:tail} says, for every $b \geq 1$,
\begin{equation}\label{eq:matrixrelation}
 \begin{pmatrix}x_{b,1}\\x_{b,2}\\x_{b,3}\\\vdots\end{pmatrix}
 =-4^{-b}T
 \begin{pmatrix}(Ax)_{b,1}\\(Ax)_{b,2}\\(Ax)_{b,3}\\\vdots\end{pmatrix},
\end{equation}
where
\begin{equation}\label{eq:matrix}
 T=\begin{pmatrix}
 1&2&2&2&\cdots\\
 0&1&2&2&\cdots\\
 0&0&1&2&\cdots\\
 \vdots&\vdots&\vdots&\ddots&\ddots
 \end{pmatrix}.
\end{equation}
is bounded and linear from $\ell^1(\N)$ to $c_0(\N)$ , with $\|T\|=2$.   

For the forward formula,  for every $b \geq 1$,  it holds
\[
 \begin{pmatrix}(Ax)_{b,1}\\(Ax)_{b,2}\\(Ax)_{b,3}\\\vdots\end{pmatrix}
 =4^bH\begin{pmatrix}x_{b,1}\\x_{b,2}\\x_{b,3}\\\vdots\end{pmatrix}
 +2|x_{0,1}|^{-b}\begin{pmatrix}1\\-1\\1\\\vdots\end{pmatrix},
\]
where
\[
 H=\begin{pmatrix}
 1&0&0&0&\cdots\\
 -2&1&0&0&\cdots\\
 2&-2&1&0&\cdots\\
 -2&2&-2&1&\cdots\\
 \vdots&\vdots&\vdots&\vdots&\ddots
 \end{pmatrix}.
\]
is a lower triangular matrix having eacht row finite.

\begin{lemma}\label{lem:examples}
Every first-coordinate value $x_{0,1} \in \R$ with $|x_{0,1}|>1$ occurs at a point of $D$. One such point is obtained by setting
\begin{equation}\label{eq:examples}
 \begin{aligned}
 x_{0,j}&=0&& \forall j\ge2,\\
 x_{b,1}&=-\frac{1}{(4|x_{0,1}|)^{b}}&& \forall b\ge1,\\
 x_{b,j}&=0&& \forall b\ge1,\ j\ge2.
 \end{aligned}
\end{equation}
Its output has coordinates
\begin{equation}\label{eq:examplevalues}
 \begin{aligned}
 (Ax)_{0,1}&=x_{0,1}-\sgn(x_{0,1}),\\
 (Ax)_{0,j}&=0&& \forall j\ge2,\\
 (Ax)_{b,1}&=\frac{1}{|x_{0,1}|^{b}}&& \forall b\ge1,\\
 (Ax)_{b,j}&=0&& \forall b\ge1,\ j\ge2.
 \end{aligned}
\end{equation}
\end{lemma}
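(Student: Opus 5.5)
The plan is to verify the claim directly from definitions \eqref{eq:domain} and \eqref{eq:operator}. Fix $t=x_{0,1}$ with $|t|>1$ and define $x$ by \eqref{eq:examples}.

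The first step checks that $x\in c_0(I)$. The only nonzero coordinates are $x_{0,1}=t$ and $x_{b,1}=-(4|t|)^{-b}$ for $b\ge1$. These satisfy $|x_{b,1}|\le 4^{-b}\to0$, so for each $\varepsilon>0$ only finitely many coordinates have modulus at least $\varepsilon$.

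The second step computes the block coordinates of $Ax$ from \eqref{eq:operator}. For $b\ge1$ and $j\ge1$, the expression
\[
4^b\Bigl(x_{b,j}+2\sum_{k=1}^{j-1}(-1)^{j-k}x_{b,k}\Bigr)
\]
reduces, because only $x_{b,1}$ is nonzero, to $4^b x_{b,1}$ when $j=1$. For $j\ge2$ it equals $4^b\cdot 2(-1)^{j-1}x_{b,1}$. Using $4^b x_{b,1}=-|t|^{-b}$, we get
\[
(Ax)_{b,1}=-|t|^{-b}+2|t|^{-b}=|t|^{-b}.
\]
For $j\ge2$ we get
\[
(Ax)_{b,j}=-2(-1)^{j-1}|t|^{-b}+2(-1)^{j-1}|t|^{-b}=0.
\]
The block-$0$ coordinates are read off directly from the definition. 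This gives \eqref{eq:examplevalues}.

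The third step verifies the summability condition in \eqref{eq:domain}. The summand there is exactly $|(Ax)_{b,j}|$, so the double sum equals $\sum_{b\ge1}|t|^{-b}=1/(|t|-1)$, which is finite because $|t|>1$. Together with $|x_{0,1}|=|t|>1$, this gives $x\in D$ and $Ax\in\ell^1(I)$. Since $t$ was an arbitrary real number with $|t|>1$, every such value occurs as a first coordinate of a point of $D$.

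As a consistency check, the resulting block sums equal $|t|^{-b}$, in agreement with \eqref{eq:blocksum}. The only step requiring care is the sign bookkeeping in the second step, namely the cancellation of the alternating term $2(-1)^{j-1}|t|^{-b}$ for $j\ge2$. No step is a genuine obstacle.
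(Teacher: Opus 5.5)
Your proposal is correct and follows the paper's own argument. You verify $x\in c_0(I)$, substitute into \eqref{eq:operator} to get $(Ax)_{b,1}=|t|^{-b}$ and the cancellation $(Ax)_{b,j}=0$ for $j\ge2$, and note that the summand in \eqref{eq:domain} is $|(Ax)_{b,j}|$, so the double sum is $\sum_{b\ge1}|t|^{-b}=1/(|t|-1)<+\infty$; the explicit bound $|x_{b,1}|\le 4^{-b}$ and the closed-form value of the sum only spell out what the paper calls ``direct substitution'' and ``absolutely summable''.
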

\begin{proof}
The array defined by \eqref{eq:examples} belongs to $c_0(I)$.  Direct substitution in \eqref{eq:operator} gives,  for every $b \geq 1$,
\[
 (Ax)_{b,1}=4^b\left(-\frac{1}{(4|x_{0,1}|)^{b}}\right)+2\frac{1}{|x_{0,1}|^{b}}
 =\frac{1}{|x_{0,1}|^{b}}.
\]
For every $j\ge2$ and every $b \geq 1$,  we have
\[
 (Ax)_{b,j}
 =-\frac{2(-1)^{j-1}}{|x_{0,1}|^{b}}+\frac{2(-1)^{j-1}}{|x_{0,1}|^{b}}=0.
\]
The output in \eqref{eq:examplevalues} is absolutely summable,  thus the point belongs to $D$.
\end{proof}

\section{Monotonicity}

In the proof of the monotonicity of $A$,  we will use that,  for every absolutely summable array $(q_{b,j})_{b,j\ge1}$,
\begin{equation}\label{eq:energy}
 \sum_{j\ge1}\left(q_{b,j}+2\sum_{k>j}q_{b,k}\right)q_{b,j}
 =\left(\sum_{j\ge1}q_{b,j}\right)^2 \qquad \forall b \geq 1.
\end{equation}
Consequently,
\begin{equation}\label{eq:weightedenergy}
 \sum_{b\ge1}4^{-b}\sum_{j\ge1}
 \left(q_{b,j}+2\sum_{k>j}q_{b,k}\right)q_{b,j}
 =\sum_{b\ge1}4^{-b}\left(\sum_{j\ge1}q_{b,j}\right)^2 \qquad \forall b \geq 1.
\end{equation}

Let $x,y\in D$. By \eqref{eq:tail},  for every $b \geq 1$ and $j \geq 1$, it holds
\[
 x_{b,j}-y_{b,j} = -4^{-b}\left((Ax)_{b,j}-(Ay)_{b,j}
 +2\sum_{k>j}\bigl((Ax)_{b,k}-(Ay)_{b,k}\bigr)\right).
\]
By using \eqref{eq:weightedenergy} and \eqref{eq:blocksum}, we get
\begin{equation}\label{eq:monopair}
\begin{aligned}
& \ \pair{x-y}{Ax-Ay} \\
= & \  (x_{0,1}-y_{0,1})^2 -(x_{0,1}-y_{0,1})\bigl(\sgn(x_{0,1})-\sgn(y_{0,1})\bigr)\\
& + \sum_{b \geq 1} -4^{-b}\sum_{j \geq 1} \left((Ax)_{b,j}-(Ay)_{b,j}
 +2\sum_{k>j}\bigl((Ax)_{b,k}-(Ay)_{b,k}\bigr)\right) \left((Ax)_{b,j}-(Ay)_{b,j}\right)\\
= & \  (x_{0,1}-y_{0,1})^2 -(x_{0,1}-y_{0,1})\bigl(\sgn(x_{0,1})-\sgn(y_{0,1})\bigr) - \sum_{b\ge1}4^{-b} \left(\sum_{j \geq 1} (Ax)_{b,j} - \sum_{j \geq 1} (Ay)_{b,j} \right)^2\\
= & \  (x_{0,1}-y_{0,1})^2 -(x_{0,1}-y_{0,1})\bigl(\sgn(x_{0,1})-\sgn(y_{0,1})\bigr)- \sum_{b\ge1}4^{-b} \left(\frac{1}{|x_{0,1}|^{b}} - \frac{1}{|y_{0,1}|^{b}} \right)^2.\\ 
\end{aligned}
\end{equation}
By the mean value theorem,  we obtain
\[
 \left | \frac{1}{|x_{0,1}|^{b}} - \frac{1}{|y_{0,1}|^{b}} \right |
 \le b\bigl||x_{0,1}|-|y_{0,1}|\bigr|
\]
and, since,  $\sum_{b\ge1}b^2 4^{-b}=\frac{20}{27}$,  it holds
\begin{equation}\label{eq:lipschitz}
 \sum_{b\ge1}4^{-b}\left(\frac{1}{|x_{0,1}|^{b}} - \frac{1}{|y_{0,1}|^{b}} \right)^2
 \le\frac{20}{27}\bigl(|x_{0,1}|-|y_{0,1}|\bigr)^2.
\end{equation}
If $x_{0,1}$ and $y_{0,1}$ have the same sign,  then $\bigl(|x_{0,1}|-|y_{0,1}|\bigr)^2 = \bigl(x_{0,1} - y_{0,1}\bigr)^2$, and
\[
 \pair{x-y}{Ax-Ay}\ge\frac7{27}(x_{0,1}-y_{0,1})^2\ge0.
\]
If their signs differ,  we may assume $x_{0,1} >1$ and $y_{0,1} < -1$. Thus
\begin{align*}
 \pair{x-y}{Ax-Ay}
& \geq  (|x_{0,1}|+|y_{0,1}|)^2  - 2 (|x_{0,1}|+|y_{0,1}|) -\frac{20}{27}(|x_{0,1}|-|y_{0,1}|)^2\\
 &= (|x_{0,1}|+|y_{0,1}|)(|x_{0,1}|+|y_{0,1}|-2) -\frac{20}{27}(|x_{0,1}|-|y_{0,1}|)^2\\
 &=\frac7{27}(|x_{0,1}|-|y_{0,1}|)^2
   +2|x_{0,1}|(|y_{0,1}|-1) +2|y_{0,1}|(|x_{0,1}|-1)>0.
\end{align*}
Thus $A$ is monotone.

\section{Maximal monotonicity}

Suppose $(z,p)\in X\times X^*$ satisfies
\begin{equation}\label{eq:candidate}
 \pair{z-x}{p-Ax}\ge0\qquad \forall x\in D.
\end{equation}
We prove $z\in D$ and $p=Az$. 

Fix $x\in D$.  For every $j\ge2$ and every $\lambda \in \R$, we have
\[
 x+\lambda e_{0,j}\in D \quad \mbox{and} \quad A(x+\lambda e_{0,j})=Ax.
\]
Equation \eqref{eq:candidate} becomes,  for every $j\ge2$ and every $\lambda \in \R$,
\[
 \pair{z-x}{p-Ax}- \pair{\lambda e_{0,j}}{p-Ax} = \pair{z-x}{p-Ax}- \lambda p_{0,j} - (Ax)_{0,j} =  \pair{z-x}{p-Ax}-\lambda p_{0,j}\ge0.
\]
From here we get that
\begin{equation}\label{eq:free}
 \boxed{p_{0,j}=0\qquad \forall j\ge2.}
\end{equation}\vspace{0.5cm}

Now fix $b\ge1$ and $j\ge1$.  For every $\lambda\in\R$, we have
\begin{equation}\label{eq:variation}
\begin{aligned}
 x+\lambda(e_{b,j}+e_{b,j+1})&\in D \quad \mbox{and} \quad A\bigl(x+\lambda(e_{b,j}+e_{b,j+1})\bigr) =Ax+\lambda4^b(e_{b,j}-e_{b,j+1}).
\end{aligned}
\end{equation}
Indeed, the output changes by $4^b\lambda$ at $(b,j)$ and by $-4^b\lambda$ at $(b,j+1)$,  and at  any later position $(b,k)$ the change is
\[
 2\cdot4^b\lambda\bigl((-1)^{k-j}+(-1)^{k-j-1}\bigr)=0.
\]
We apply \eqref{eq:candidate} to \eqref{eq:variation}. Using that $\pair{e_{b,j}+e_{b,j+1}}{4^b(e_{b,j}-e_{b,j+1})}=0$,  we obtain, for every $\lambda\in\R$,
\begin{align*}
 0\le{}&\pair{z-x}{p-Ax}\\
 &-\lambda\Bigl[p_{b,j}+p_{b,j+1}-(Ax)_{b,j}-(Ax)_{b,j+1} + 4^b(z_{b,j}-z_{b,j+1}-x_{b,j}+x_{b,j+1})\Bigr]\\
= & \ \pair{z-x}{p-Ax} -\lambda\Bigl[p_{b,j}+p_{b,j+1} + 4^b(z_{b,j}-z_{b,j+1})\Bigr],
\end{align*}
where the simplification of the expression in brackets follows using \eqref{eq:recurrence}.  This leads to
\begin{equation}\label{eq:candidaterecurrence}
 p_{b,j}+p_{b,j+1}=4^b(z_{b,j+1}-z_{b,j}) \qquad \forall b\ge1,\ j\ge1.
\end{equation}\vspace{0.5cm}

We sum the previous identity from $j$ to $N >j+1$ and let $N \to +\infty$.  Since $p\in\ell^1(I)$ and $z\in c_0(I)$,  this gives
\begin{equation}\label{eq:candidatetail}
 z_{b,j}=-4^{-b}\left(p_{b,j}+2\sum_{k>j}p_{b,k}\right) \qquad \forall b\ge1,\ j\ge1.
\end{equation}\vspace{0.5cm}

Absolute summability of $p$ implies
\begin{equation}\label{eq:summables}
 \sum_{b\ge1}\left|\sum_{j\ge1}p_{b,j}\right|
 \le\sum_{b,j\ge1}|p_{b,j}|\le\norm{p}_1<+\infty.
\end{equation}
For any test point $x\in D$,  from \eqref{eq:tail},  \eqref{eq:candidatetail},  \eqref{eq:free} and \eqref{eq:weightedenergy}, we obtain
\begin{align*}
0 \leq  \pair{z-x}{p-Ax}
 ={}&(z_{0,1}-x_{0,1})\bigl(p_{0,1}-x_{0,1}+\sgn(x_{0,1})\bigr)\\
& - \sum_{b\ge1}4^{-b} \sum_{j\ge1} \left(p_{b,j} - (Ax)_{b,j}  + 2\sum_{k>j}\left(p_{b,k} - (Ax)_{b,k}\right) \right) \left(p_{b,j} - (Ax)_{b,j}  \right) \\
={}&(z_{0,1}-x_{0,1})\bigl(p_{0,1}-x_{0,1}+\sgn(x_{0,1})\bigr) -\sum_{b\ge1}4^{-b}
 \left(\sum_{j\ge1}p_{b,j}-\sum_{j\ge1}(Ax)_{b,j}\right)^2\\
 ={}&(z_{0,1}-x_{0,1})\bigl(p_{0,1}-x_{0,1}+\sgn(x_{0,1})\bigr) -\sum_{b\ge1}4^{-b}
 \left(\sum_{j\ge1}p_{b,j}- \frac{1}{|x_{0,1}|^{b}}\right)^2.
\end{align*}
Consequently,
\begin{equation}\label{eq:scalarineq}
\begin{aligned}
 &\sum_{b\ge1}4^{-b}
 \left(\sum_{j\ge1}p_{b,j}-\frac{1}{|x_{0,1}|^{b}}\right)^2 \le(z_{0,1}-x_{0,1})
 \bigl(p_{0,1}-x_{0,1}+\sgn(x_{0,1})\bigr).
\end{aligned}
\end{equation}
By Lemma~\ref{lem:examples}, this inequality can be tested at every real first-coordinate value $x_{0,1}$ with $|x_{0,1}|>1$. Its other input coordinates no longer appear.\vspace{0.5cm}

Next we will prove that $|z_{0,1}|>1$. Suppose $|z_{0,1}|\le1$,  and fix $N\in\N$.  It holds
\begin{equation*}
\sum_{b=1}^N 4^{-b}
 \left(\sum_{j\ge1}p_{b,j}-\frac{1}{|x_{0,1}|^{b}}\right)^2 \le \sum_{b\ge1}4^{-b}
 \left(\sum_{j\ge1}p_{b,j}-\frac{1}{|x_{0,1}|^{b}}\right)^2 \le(z_{0,1}-x_{0,1})
 \bigl(p_{0,1}-x_{0,1}+\sgn(x_{0,1})\bigr).
\end{equation*}
First,  we let $x_{0,1}$ decrease to $1$.  Since $x_{0,1}-\sgn(x_{0,1})=x_{0,1}-1\to0$, we get
\begin{equation}\label{eq:plus}
 \sum_{b=1}^N4^{-b}\left(\sum_{j\ge1}p_{b,j}-1\right)^2
 \le(z_{0,1}-1)p_{0,1}.
\end{equation}
Now,  we let $x_{0,1}$ increase to $-1$ from below.  Since $x_{0,1}-\sgn(x_{0,1})=x_{0,1}+1\to0$,  we get
\begin{equation}\label{eq:minus}
 \sum_{b=1}^N4^{-b}\left(\sum_{j\ge1}p_{b,j}-1\right)^2
 \le(z_{0,1}+1)p_{0,1}.
\end{equation}

We multiply \eqref{eq:plus} by $\frac{1+z_{0,1}}{2} \geq 0$ and \eqref{eq:minus} by $\frac{1- z_{0,1}}{2} \geq 0$.  Adding yields
\begin{align*}
 \sum_{b=1}^N4^{-b}\left(\sum_{j\ge1}p_{b,j}-1\right)^2 \le\left[\frac{1+z_{0,1}}2(z_{0,1}-1)
       +\frac{1-z_{0,1}}2(z_{0,1}+1)\right]p_{0,1}=0.
\end{align*}
Hence $\sum_{j\ge1}p_{b,j}=1$ for every $1\le b\le N$. As $N$ is arbitrary, this holds for every $b\ge1$, contradicting \eqref{eq:summables}. We have proved
\begin{equation}\label{eq:outside}
\boxed{ |z_{0,1}|>1.}
\end{equation}\vspace{0.5cm}

By \eqref{eq:outside} and Lemma~\ref{lem:examples}, we can choose a test point with $x_{0,1}=z_{0,1}$. The right-hand side of \eqref{eq:scalarineq} is then zero. All summands on the left are nonnegative, so
\begin{equation}\label{eq:recoveredsums}
 \sum_{j\ge1}p_{b,j}=\frac{1}{|x_{0,1}|^{b}}  \qquad  \forall  b\ge1.
\end{equation}

We use again \eqref{eq:scalarineq}, namely,  
\[
 (z_{0,1}-x_{0,1})
 \bigl(p_{0,1}-x_{0,1}+\sgn(x_{0,1})\bigr)\ge0 \qquad \mbox{for all} \ x \ \mbox{with} \ |x_{0,1}|>1.
\]
Choose $0<\delta<|z_{0,1}|-1$.  First, we test at a point with $x_{0,1}=z_{0,1}-\delta$. Its first coordinate has the same sign as $z_{0,1}$, and the inequality becomes
\[
 p_{0,1}\ge z_{0,1}-\sgn(z_{0,1})-\delta.
\]
Next, we use a test point with $x_{0,1}=z_{0,1}+\delta$.  Again the sign is unchanged, and now
\[
 p_{0,1}\le z_{0,1}-\sgn(z_{0,1})+\delta.
\]
Letting $\delta\downarrow0$ proves
\begin{equation}\label{eq:recoveredscalar}
 \boxed{p_{0,1}=z_{0,1}-\sgn(z_{0,1}).}
\end{equation}\vspace{0.5cm}

Let $b \geq 1$.  Equation \eqref{eq:candidatetail} at $j=1$ can be written as
\[
 -4^b z_{b,1}=2\sum_{k\ge1}p_{b,k}-p_{b,1}.
\]
By using \eqref{eq:recoveredsums},  we obtain
\[
 p_{b,1}=4^b z_{b,1}+\frac{2}{|z_{0,1}|^{b}}.
\]
From \eqref{eq:candidaterecurrence}, we get the recursion
\[
 p_{b,j+1}=4^b(z_{b,j+1}-z_{b,j})-p_{b,j} \quad \forall j \geq 1.
\]
Induction yields
\begin{equation}\label{eq:recoveredformula}
 \boxed{p_{b,j}
 =4^b\left(z_{b,j}+2\sum_{k=1}^{j-1}(-1)^{j-k}z_{b,k}\right)
   +2(-1)^{j-1}|z_{0,1}|^{-b} \quad \forall j \geq 1.}
\end{equation}

Equation \eqref{eq:recoveredformula} is the original formula \eqref{eq:operator} evaluated at $z$ in every block $b\ge1$.  Equations \eqref{eq:free} and \eqref{eq:recoveredscalar} give exactly its block zero entries.  Since $p\in\ell^1(I)$ and $|z_{0,1}|>1$, the defining summability condition \eqref{eq:domain} holds at $z$. Therefore $z\in D$ and $Az=p$,  proving maximal monotonicity.

\section{Failure of (FPV) through the nonconvex domain closure}\label{sec:fpv}

In this section, we use a result of Simons \cite[Theorem~44.2]{Simons}, which states that if a maximally monotone operator is of type (FPV), then the norm closure of its domain is convex.  Thus a nonconvex norm closure of the domain rules out type (FPV).

\begin{proposition}\label{prop:nonconvexclosure}
For the operator defined by \eqref{eq:domain}--\eqref{eq:operator}, the set $\overline D=\overline{\dom A}$ is not convex. Consequently, $A$ is not of type (FPV).
\end{proposition}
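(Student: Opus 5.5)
The plan is to exhibit two points $u,v\in D$ whose midpoint lies at positive distance from $D$, so that $\frac12(u+v)\notin\overline D$. The natural candidates are the points of Lemma~\ref{lem:examples} with first coordinates $2$ and $-2$. Put $u_{0,1}=2$, $v_{0,1}=-2$, and $u_{b,1}=v_{b,1}=-8^{-b}$ for $b\ge1$, with all other coordinates zero. Both points belong to $D$ by Lemma~\ref{lem:examples}. Their midpoint $w$ satisfies $w_{0,1}=0$.

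The key observation is the statement of Theorem~\ref{thm:main}: every $x\in D$ has $|x_{0,1}|>1$. Since the coordinate functional $x\mapsto x_{0,1}$ is $1$-Lipschitz for $\norm{\cdot}_\infty$, we get for every $x\in D$
\[
\norm{w-x}_\infty\ge |w_{0,1}-x_{0,1}|=|x_{0,1}|>1 .
\]
Hence $\operatorname{dist}(w,D)\ge1$, so $w\notin\overline D$. On the other hand, $u,v\in D\subseteq\overline D$. Therefore $\overline D$ is not convex.

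The same argument shows more generally that $\overline D\subseteq\{x:|x_{0,1}|\ge1\}$, while $\overline D$ contains points with $x_{0,1}=\pm2$. This is already incompatible with convexity, because any convex set containing such points also contains a point with $x_{0,1}=0$.

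For the consequence, $A$ is maximally monotone by the results of the previous sections, and its domain is $D$. By \cite[Theorem~44.2]{Simons}, a maximally monotone operator of type (FPV) has a convex norm closure of its domain. Taking the contrapositive, $A$ is not of type (FPV).

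There is no real obstacle here. The only point needing care is to keep the distance estimate uniform over all of $D$ rather than over a single sequence. This is immediate from the strict inequality $|x_{0,1}|>1$ in the definition \eqref{eq:domain}.
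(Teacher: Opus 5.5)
Your proof is correct and follows essentially the same route as the paper. The paper also takes the points of Lemma~\ref{lem:examples} with first coordinates $\pm2$, rules out their midpoint via $\overline D\subseteq\{x:|x_{0,1}|\ge1\}$, and invokes \cite[Theorem~44.2]{Simons}; that inclusion comes directly from the strict inequality in \eqref{eq:domain}, as you note, so citing Theorem~\ref{thm:main} for it (which this proposition helps prove) is unnecessary.
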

\begin{proof}
Every point $x\in D$ satisfies $|x_{0,1}|>1$.  Therefore,  
\begin{equation}\label{eq:closureprojection}
 \overline D\subseteq\{x\in X:|x_{0,1}|\ge1\}.
\end{equation}
Choose the two points from Lemma~\ref{lem:examples} with first coordinates $2$ and $-2$:
\begin{equation}\label{eq:two-domain-points}
 x^+=2e_{0,1}-\sum_{b\ge1}\frac{1}{8^{b}}e_{b,1},\qquad
 x^-=-2e_{0,1}-\sum_{b\ge1}\frac{1}{8^{b}}e_{b,1}.
\end{equation}
Both belong to $D$; their outputs are
\[
 Ax^+=e_{0,1}+\sum_{b\ge1}\frac{1}{2^{b}}e_{b,1},\qquad
 Ax^-=-e_{0,1}+\sum_{b\ge1}\frac{1}{2^{b}}e_{b,1}.
\]
Their midpoint is
\begin{equation}\label{eq:missing-midpoint}
 \frac{x^++x^-}{2}=-\sum_{b\ge1}\frac{1}{2^{b}}e_{b,1} \notin \overline D.
\end{equation}
This proves that  $\overline D$ is not convex. This completes the proof of Theorem~\ref{thm:main}.
\end{proof}

\section{The sum conjecture consequence}\label{sec:sum}

Theorem~\ref{thm:main}, together with the contrapositive of \eqref{eq:criterion}, already implies the existence of a normal cone $N_C$ such that $A+N_C$ fails to be maximally monotone under \eqref{eq:cq}. We now identify such a normal cone and establish the nonmaximality of the sum directly.

\begin{corollary}\label{cor:sum}
Let
\[
 C=\{x\in X:x_{0,1}\ge-1\}.
\]
Then $A$ and $N_C$ are maximally monotone,
\[
 \dom A\cap\inte(\dom N_C)\ne\varnothing,
\]
and $A+N_C$ is not maximally monotone.  Thus, the sum theorem under Rockafellar's constraint qualification fails in nonreflexive Banach spaces.
\end{corollary}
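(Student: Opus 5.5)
The plan is to exhibit an explicit pair $(z,p)\notin\gra(A+N_C)$ that is monotonically related to $\gra(A+N_C)$. The routine facts come first. $A$ is maximally monotone by Theorem~\ref{thm:main}. $N_C$ is the subdifferential of the indicator of the closed convex set $C$, hence maximally monotone. For the qualification we note $\inte(\dom N_C)=\inte C=\{x:x_{0,1}>-1\}$. The point $x^+$ from \eqref{eq:two-domain-points} lies in $D$ and has $x^+_{0,1}=2>-1$, so $\dom A\cap\inte C\ne\varnothing$.

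Next we describe the sum. We have $\dom(A+N_C)=D\cap C=\{x\in D: x_{0,1}>1\}$, because $x_{0,1}<-1$ violates $x_{0,1}\ge-1$ and the points with $x_{0,1}>1$ are interior to $C$. On these points $N_C(x)=\{0\}$. Hence $A+N_C$ is just $A$ restricted to the positive branch $D^+=\{x\in D:x_{0,1}>1\}$.

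The candidate is the missing midpoint direction. We take $z$ with $z_{0,1}=1$ and $z_{b,1}=-4^{-b}$, all other coordinates zero, together with $p=\sum_{b\ge1}e_{b,1}\cdot c_b$ for a summable choice of $c_b$, say $c_b=1$ at first. Since we need $p\in\ell^1$, we set instead $p_{0,1}=0$ and choose block sums $\sum_j p_{b,j}=s_b$. We will argue from the scalar inequality \eqref{eq:scalarineq}. Restricted to test points with $x_{0,1}>1$, the computation preceding \eqref{eq:scalarineq} (it used only \eqref{eq:free}, \eqref{eq:candidaterecurrence}, \eqref{eq:candidatetail}, which we arrange by construction) shows that
\[
\pair{z-x}{p-Ax}=(z_{0,1}-x_{0,1})(p_{0,1}-x_{0,1}+1)-\sum_{b\ge1}4^{-b}\Bigl(s_b-x_{0,1}^{-b}\Bigr)^2 .
\]
So we need $(t-z_{0,1})(t-1-p_{0,1})\ge\sum_b4^{-b}(s_b-t^{-b})^2$ for all $t>1$.

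The concrete choice is $z_{0,1}=1$, $p_{0,1}=-M$ with $M>0$ large, and $s_b=0$. We build $z_{b,j},p_{b,j}$ from \eqref{eq:candidatetail} with $p_{b,j}=0$ for all $b,j\ge1$, so $z_{b,j}=0$ in those blocks. The left side is then $(t-1)(t-1+M)$. The right side is $\sum_b 4^{-b}t^{-2b}=1/(4t^2-1)$, and it stays bounded by $1/3$ as $t\downarrow1$. The inequality fails near $t=1$, so this simple choice does not work. The fix is to pick $z_{0,1}<1$, e.g. $z_{0,1}=0$, which is also $\notin\dom A$. The left side becomes $t(t-1+M)\ge M$, and this dominates $1/3$ once $M\ge1$.

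So the final choice is $z=0$ and $p=-e_{0,1}$. For every $x\in D^+$ we get $\pair{0-x}{-e_{0,1}-Ax}\ge x_{0,1}\cdot x_{0,1}-\sum_b4^{-b}x_{0,1}^{-2b}\ge 1-\tfrac13>0$. We will verify this directly via \eqref{eq:tail} and \eqref{eq:weightedenergy} with $q=-Ax$, since here $p$ has zero blocks. Because $0\notin D$, the pair $(0,-e_{0,1})$ is not in $\gra(A+N_C)$, which gives nonmaximality.

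The main obstacle is justifying the energy identity with the exact sign bookkeeping for the given $(z,p)$. This reduces to \eqref{eq:monopair}-type computations applied with $y$ replaced by the pair $(0,-e_{0,1})$, and those go through since block sums of $p$ vanish.
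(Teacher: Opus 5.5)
Your proposal is correct and, once the exploratory detour through $z_{0,1}=1$ is discarded, it is exactly the paper's argument. It uses the same qualification point $x^+$, the same identification $\dom(A+N_C)=\{x\in D:x_{0,1}>1\}$ with $N_C(x)=\{0\}$ there, and the same witness $(0,-e_{0,1})$. That witness is verified via \eqref{eq:tail}, \eqref{eq:weightedenergy} and \eqref{eq:blocksum} to give $x_{0,1}^2-\frac{1}{4x_{0,1}^2-1}>\frac23$, and it lies outside the graph because $0\notin D$.
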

\begin{proof}
The set $C$ is convex and closed, with $\inte C=\{x\in X:x_{0,1}>-1\}$.  The normal cone operator, defined by
\[
 N_C(x)=\{p\in X^*:\pair{y-x}{p}\le0\ \text{for all }y\in C\} \quad \mbox{for} \ x \in C,
\]
and $N_C(x)=\varnothing$ otherwise, is maximally monotone \cite[Theorem~A]{RockafellarSubdifferential}.

The point $x^+$ in \eqref{eq:two-domain-points} belongs to $D\cap\inte C = \dom A\cap\inte(\dom N_C)$,  thus the qualification holds. Since every $x\in D$ has $|x_{0,1}|>1$,
\[
 \dom(A+N_C)=D\cap C=\{x\in D:x_{0,1}>1\}.
\]
Thus,  at all $x \in \dom(A+N_C)$,  it holds $N_C(x)=\{0\}$ and $(A+N_C)x=Ax$. 

Let $x \in \dom(A+N_C)$.  By using again \eqref{eq:weightedenergy}, we have
\begin{align*}
 \pair{0-x}{-e_{0,1}-(A+N_C)x} = & \pair{-x}{-e_{0,1}-Ax} = -x_{0,1} +  \pair{x}{Ax}\\
=  & \ -x_{0,1} + x_{0,1}(x_{0,1}-1) -\sum_{b\ge1}4^{-b}\left(\sum_{j\ge1}(Ax)_{b,j}\right)^2\\
= & \ x_{0,1}^2 - \sum_{b\ge1}\frac{1}{4^{b} |x_{0,1}|^{2b}}\\
= & \ x_{0,1}^2-\frac1{4x_{0,1}^2-1}> 1- \frac{1}{3} = \frac23.
\end{align*}
Thus $(0,-e_{0,1})$ is monotonically related to graph of $A + N_C$.  It is not in that graph because $A0=\varnothing$.  This proves that $A+N_C$ is not maximally monotone.
\end{proof}

\section*{Disclosure of AI assistance}

GPT-6 Astra assisted with the mathematical development and writing
of this paper. All mathematical statements and proofs were
independently verified by the author.

\end{document}